\documentclass[11pt]{article}

\usepackage[utf8]{inputenc}
\usepackage{amsthm,amsmath,amssymb,bm}
\usepackage{graphicx,tikz}
\usepackage[dvipsnames]{xcolor}
\usepackage{verbatim}
\usepackage{float}
\usepackage{mathrsfs}
\usepackage{hyperref}

\usepackage{caption}
\usepackage{array}   
\newcolumntype{C}{>{$}c<{$}}
\newcolumntype{L}{>{$}l<{$}}

\usepackage[short labels]{enumitem}
\setlist[enumerate]{topsep=0pt,itemsep=-1ex,partopsep=1ex,parsep=1ex}
\setlist[itemize]{topsep=0pt,itemsep=-1ex,partopsep=1ex,parsep=1ex}

\usepackage{bbm}

\theoremstyle{plain}
\newtheorem{theo}{Theorem}[section]
\newtheorem{prop}[theo]{Proposition}
\newtheorem{lemma}[theo]{Lemma}

\newtheorem{conj}[theo]{Conjecture}
\newtheorem{claim}[theo]{Claim}

\theoremstyle{definition}

\newcommand{\eps}{\varepsilon}

\newcommand{\bZ}{\mathbb{Z}}
\newcommand{\cM}{\mathcal{M}}
\newcommand{\cR}{\mathcal{R}}

\DeclareMathOperator{\RN}{RN_{\nu,G}(S)}

\newenvironment{proofclaim}[1][Proof of claim]{\begin{proof}[#1]}{\end{proof}}

\title{The perfect 1-factorisation conjecture holds asymptotically}
\author{Yangyang Cheng\thanks{Fakultät für Informatik und Mathematik, Universität Passau, Germany.
\emph{Email}: \href{mailto:yangyang.cheng@uni-passau.de}{\tt yangyang.cheng@uni-passau.de}, \href{mailto:amedeo.sgueglia@uni-passau.de}{\tt amedeo.sgueglia@uni-passau.de}.
YC is partially supported by
the Deutsche Forschungsgemeinschaft (DFG, German Research Foundation)-542321564;
AS is funded by the Alexander von Humboldt Foundation.}
\and Amedeo Sgueglia\footnotemark[1] 
}

\begin{document}

\maketitle

\begin{abstract}
    A famous conjecture of Anton Kotzig states that for every even integer $n\ge 4$, the complete graph $K_n$ of order $n$ can be decomposed into $n - 1$ perfect matchings such that every pair of these matchings forms a Hamilton cycle. Despite the great interest, the conjecture is far from being solved.
    Here we show that the conjecture holds asymptotically, namely that $K_n$ can be decomposed into $n-1$ perfect matchings such that $(1-o(1))n$ of them have the property that any pair forms a Hamilton cycle.
\end{abstract}

\section{Introduction}
\label{sec:intro}
A \emph{perfect matching} or \emph{$1$-factor} of a graph $G$ is a collection of pairwise vertex-disjoint edges which span the entire vertex set.
A \emph{$1$-factorisation} of $G$ is a partition of its edge set into perfect matchings.
Observe that the union of two perfect matchings (on the same vertex set) is a disjoint collection of cycles and double edges.
When the union is a Hamilton cycle, we say that the two matchings form a \emph{perfect} pair. 
In 1963 Kotzig~\cite{kotzig:64} made the following conjecture, which has become known as the \emph{perfect $1$-factorisation conjecture}.
\begin{conj}
\label{conj:kotzig}
    For every even integer $n \ge 4$, the complete graph $K_n$ on $n$ vertices admits a $1$-factorisation where any pair of perfect matchings is perfect.
\end{conj}

Kotzig~\cite{kotzig:64} observed that the following folklore $1$-factorisation of $K_n$ proves Conjecture~\ref{conj:kotzig} when $n-1$ is a prime number:
Arrange $n-1$ vertices to create the corners of a regular
$(n-1)$-gon and put the last vertex in the centre.
Denote the vertex in the centre by $u$ and choose any vertex at the corner, say $v$.
Form the first perfect matching by taking the edge $uv$ together with all the edges which are perpendicular to it.
A $1$-factorisation can then be obtained by considering the $n-1$ rotations of this matching around the centre (see~Figure~\ref{fig:perfect} where $n=6$).
One can show that if $n-1$ is prime, then any pair of matchings is perfect, which proves Conjecture~\ref{conj:kotzig} under this assumption.
Later Anderson~\cite{anderson:73} provided another infinite family of constructions which proves Conjecture~\ref{conj:kotzig} when $n/2$ is a prime number.
Moreover, the conjecture has been verified for small values of $n$ (with the current smallest open case being $n=64$) and a few more sporadic ones.
We refer to the survey of Rosa~\cite{rosa:2019} for a comprehensive list of results.
The conjecture has also connections to the acyclic edge-chromatic number, to Latin squares and to orthogonal 1-factorisations, and Glock and the second author~\cite{GS:2025} have recently considered a version of it for random graphs.

\begin{figure}[ht]
    \centering
    \includegraphics[scale=0.4]{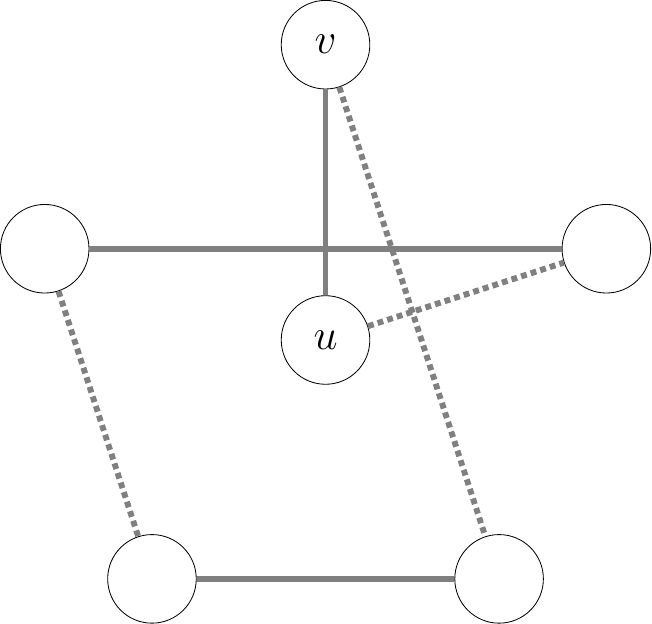}
    \caption{The solid edges form the first perfect matching; the dotted edges represent the perfect matching obtained by rotating the first one once around the centre. After rotating three more times, we get a perfect $1$-factorisation of $K_6$.}
    \label{fig:perfect}
\end{figure}

Given the difficulty of proving Conjecture~\ref{conj:kotzig} for all $n$, weaker versions have also been considered. 
Namely, one could ask for a $1$-factorisation of $K_n$ where only certain pairs of matchings are required to be perfect.
Kr{\'a}lovi{\v c} and Kr{\'a}lovi{\v c}~\cite{KK:05} constructed a $1$-factorisation $\{M_1,\dots,M_{n-1}\}$ of $K_n$ where the pair $\{M_1,M_i\}$ is perfect for each $i \in \{2,\dots,n-1\}$.
Moreover, Wagner~\cite{wagner:92} observed that in the construction of Figure~\ref{fig:perfect}, out of the $\binom{n-1}{2}$ pairs of perfect matchings, $(n-1) \cdot \frac{\varphi(n-1)}{2}$ of them are perfect, where $\varphi(\cdot)$ is the Euler's totient function.
(Observe that if $n-1$ is a prime then $\varphi(n-1)=n-2$ and $(n-1) \cdot \frac{\varphi(n-1)}{2}=\binom{n-1}{2}$, which recovers the fact that the construction is a perfect $1$-factorisation.)

Here we show that Conjecture~\ref{conj:kotzig} holds asymptotically.

\begin{theo}
\label{theo:main}
    For all $\eps>0$ there exists $n_0 \in \mathbb{N}$ such that for all even $n \ge n_0$, the complete graph $K_n$ has a $1$-factorisation which contains $(1-\eps)n$ perfect matchings such that any two of them form a perfect pair. 
\end{theo}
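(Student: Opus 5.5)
Our plan is to build the factorisation in two stages: an absorption-free "main part" made of many pairwise perfect matchings covering almost all edges, and a leftover graph of small maximum degree that we factorise arbitrarily. The key observation is that the $(1-\eps)n$ matchings required to be pairwise perfect need not come from a global algebraic structure. It suffices to find edge-disjoint perfect matchings $M_1,\dots,M_m$ with $m=(1-\eps)n$ such that every $M_i\cup M_j$ is a Hamilton cycle, and such that the remaining graph $H=K_n\setminus\bigcup_i M_i$ is regular of degree $n-1-m\approx \eps n$ and has a $1$-factorisation. For the last requirement we aim to make $H$ a regular graph of linear degree with good pseudorandom (or at least robust expansion) properties. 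Then $H$ has a $1$-factorisation by known results on $1$-factorising dense regular graphs (e.g.\ the Csaba--Kühn--Lo--Osthus--Treglown proof of the $1$-factorisation conjecture for degree $\ge n/2$, or more simply Vizing-type/robust expander decomposition results, since $H$ will be an expander of even order).

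To produce the $M_i$ we use an algebraic template generalising the rotational construction of Figure~\ref{fig:perfect}, and we avoid the primality issue by working inside a prime-sized piece. Take a prime $p$ with $(1-\eps/2)n\le p+1\le n$, which exists by the prime number theorem for large $n$. Place $p$ vertices on $\bZ_p$ together with a centre vertex $u$; the $p$ rotations $F_1,\dots,F_p$ of the starter matching form a perfect $1$-factorisation of $K_{p+1}$ on this set $A$. The remaining set $B$ of $n-p-1\le \eps n/2$ vertices (an even number) must be inserted. Our plan is to modify each $F_i$ by choosing a set of edges $e\in F_i$ and "subdividing" them through $B$: replace an edge $xy\in F_i$ by a path through vertices of $B$ inside $M_i$ so that $M_i$ becomes a perfect matching on $A\cup B$. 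Concretely, we split $B$ into pairs $b_1b_1'$, and for a chosen edge $xy\in F_i$ we use $xb,\ b'y$ in $M_i$ and $bb'$ or similar in a partner. We choose these insertions so that, for every $i\ne j$, the Hamilton cycle $F_i\cup F_j$ on $A$ is transformed into a Hamilton cycle on $A\cup B$. Inserting a vertex pair on an edge of a Hamilton cycle keeps it a single cycle provided the insertions in $M_i$ and $M_j$ interleave consistently. We make the choices randomly, for instance using each vertex of $B$ in distinct edges with random rotational offsets. We verify via a local switching analysis that the union stays a Hamilton cycle except for a few bad pairs, and we discard the matchings involved in bad pairs (an $\eps$-fraction).

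The leftover graph $H$ then consists of the edges between $B$ and $A$ not used, plus the edges of $A$ freed by subdivision, plus the edges inside $B$, and we must add back the discarded matchings. We need $H$ to be regular, which is automatic since $K_n$ minus edge-disjoint perfect matchings is regular. It must also be $1$-factorisable. Its degree is about $\eps n$, and since the random choices spread the edges, we expect $H$ to have the required expansion. The one tricky region is $B$: vertices of $B$ have almost all their $A$-edges consumed. We may therefore need to reserve edges in advance, keeping a random $\eps$-fraction of the rotations unused so that $H$ contains a robust expander spanning everything.

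The main obstacle is the insertion step: modifying the $F_i$ to cover $B$ while keeping almost all pairwise unions Hamiltonian. Each vertex of $B$ has degree $n-1$ and so must lie in every $M_i$, which forces about $|B|/2$ insertions per matching. The parity and orientation of each insertion along the cycle $F_i\cup F_j$ determines whether the cycle splits. Our first attempt is to perform all insertions for $M_i$ on edges of $F_i$ indexed by a structured rotation set, so that in $F_i\cup F_j$ the inserted segments appear as disjoint detours. Failing that, we will apply the random greedy/nibble approach with switchings to repair the few split cycles.
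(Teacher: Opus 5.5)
The proposal has two genuine gaps. The central step is missing, and the random version you sketch cannot supply it; the expansion of the leftover graph is also only asserted.

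\textbf{Constructing the pairwise perfect matchings.} The decisive step is producing $(1-\eps)n$ pairwise perfect matchings on all $n$ vertices, and it is not carried out. Each $b\in B$ has only $|B|-1$ neighbours inside $B$. So in all but at most $2|B|$ of the matchings, both $b$ and $b'$ are matched into $A$. Hence in $M_i\cup M_j$ the pair $\{b,b'\}$ does not subdivide an edge of the Hamilton cycle $F_i\cup F_j$. Instead it removes an edge $xy\in F_i$ \emph{and} an edge $x'y'\in F_j$, and reconnects the four endpoints through $b$ and $b'$. This keeps a single cycle or splits it, depending on the relative orientation of $xy$ and $x'y'$ along the cycle. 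Heuristically, with random choices a single reconnection already splits the cycle for a constant proportion of pairs. With $|B|/2$ reconnections superimposed, a typical pair is not perfect. So bad pairs are not ``few'', discarding the matchings involved would leave almost nothing, and no switching argument is given to repair this.

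What you need is an insertion rule under which \emph{every} pair is perfect, together with a proof. The paper's rule is as follows. Label $\{u\}\cup B$, which has odd size $n-p=2s+1$, as $u_{-s},\dots,u_s$ with $u_0=u$. In the $d$-th rotation, replace the edges $\{d-j,d+j\}$, $1\le j\le s$, of $F_d$ by $u_{-j}(d-j)$ and $u_j(d+j)$. Now let $C$ be a cycle of $M_{d_1}\cup M_{d_2}$ containing $r'$ central vertices and $k$ edges of each matching inside $\bZ_p$. Sum the labels of $V(C)\cap\bZ_p$ once along $M_{d_1}$-edges and once along $M_{d_2}$-edges; this gives $(r'+2k)(d_1-d_2)\equiv 0 \pmod p$. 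Since $1\le r'+2k\le p$, this forces $r'+2k=p$, so $C$ is Hamiltonian. This is exactly the ``structured rotation set'' of your first attempt, but the proposal neither specifies it nor has this invariant.

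\textbf{Expansion of the leftover graph.} Returning a random $\eps$-fraction of the matchings to the leftover is the right idea. For the construction above, the leftover is $K_{n-p}$ on the centre plus a circulant on $\bZ_p$, so it has an odd component and no perfect matching at all. But ``we expect $H$ to have the required expansion'' is precisely what must be proved. A Chernoff bound followed by a union bound over all pairs of linear-sized sets does not suffice when $\eps$ is small.

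The paper gets expansion from the Alon--Pokrovskiy--Sudakov theorem. Keep each colour class of a proper edge-colouring of $K_n$ independently with probability $\eps$. Then with high probability $e_G(S,T)\ge(1-o(1))\eps|S||T|$ for all $|S|,|T|\ge\tau n/2$. Averaging over a set $T$ of $\tau n/2$ vertices outside the robust neighbourhood of $S$ then makes $G$, and hence $H\supseteq G$, a robust $(\nu,\tau)$-expander. K\"uhn--Osthus then $1$-factorises $H$.

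Of the tools you list, only this one applies to $H$, whose degree is about $\eps n$. The Csaba--K\"uhn--Lo--Osthus--Treglown theorem needs degree about $n/2$, and Vizing's theorem only gives $\Delta+1$ colours.
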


Theorem~\ref{theo:main} is proved via a probabilistic argument and the desired $1$-factorisation is obtained from a random alteration of a large collection of perfect matchings with the property that any pair is perfect.
For the latter result, we can in fact give a constructive proof.

\begin{theo}
\label{theo:many_matchings}
    For all $\eps>0$ there exists $n_0 \in \mathbb{N}$ such that for all even $n \ge n_0$, the complete graph $K_n$ contains $(1-\eps)n$ edge-disjoint perfect matchings such that any two of them form a perfect pair. 
\end{theo}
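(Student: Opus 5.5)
The plan is to use the two known perfect $1$-factorisations from the introduction as building blocks, together with the fact that primes are dense (by the prime number theorem, for every $\delta>0$ and $n$ large there is a prime $p$ with $(1-\delta)n \le p \le n$). Write $n$ even and large, and choose a prime $p$ with $p+1\le n$ and $n-(p+1)\le \delta n$, where we may assume $n-(p+1)$ is even since $p+1$ is even. The folklore construction gives a perfect $1$-factorisation $M_1,\dots,M_p$ of $K_{p+1}$ on a vertex set $A\subseteq V(K_n)$ with $|A|=p+1$. Let $B=V(K_n)\setminus A$, with $|B|=2k\le\delta n$. The task is to extend each $M_i$ (or most of them) to a perfect matching of $K_n$ by absorbing the vertices of $B$, while keeping the extensions edge-disjoint and pairwise perfect.

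The extension is done by \emph{edge insertion}. For each $i$ and each vertex $b\in B$, pick an edge $x y\in M_i$ and replace it by the path $x\,b_1\,b_2\,y$ with $b_1b_2$ a pair of vertices from $B$. Concretely, pair up $B$ as $b_1b_1',\dots,b_kb_k'$ (using, if needed, a different pairing of $B$ for different $i$), and for matching $M_i$ select $k$ edges $e^i_1,\dots,e^i_k$ of $M_i$. Replace each $e^i_j=x y$ by the edges $x b_j$, $b_j' y$ and add the edge $b_jb_j'$. The union of two modified matchings $M_i'$, $M_\ell'$ is obtained from the Hamilton cycle $M_i\cup M_\ell$ by rerouting. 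As long as the chosen edges $e^i_j$ and $e^\ell_{j'}$ are far apart on the cycle, and the $B$-pairs used by $i$ and $\ell$ are different, each rerouting simply subdivides the cycle by inserting a short detour. The result is still a single Hamilton cycle on $A\cup B$. The main danger is the $B$–$B$ edges: two matchings cannot both use $b_jb_j'$, and if both did the union would contain a double edge.

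To manage this, only $(1-\eps)n$ of the matchings are kept. The edges inside $B$ form $K_{2k}$, which has $2k-1$ edge-disjoint perfect matchings. A single pairing of $B$ can therefore be used only by one $M_i$, which is far too few when $2k\ll p$. So I would instead let each $M_i'$ match $B$ to $A$ directly, without $B$–$B$ edges. For $b\in B$ in matching $i$, pick two edges $xy$ and $x'y'$ of $M_i$ and two vertices $b,b'\in B$, and replace them by $xb$, $by'$... This is not a matching. The clean fix is to insert both $b$ and $b'$ using one edge $xy\in M_i$, as $x b$, $b' y$, plus the edge $bb'$, and to choose pairings of $B$ across the $p$ matchings. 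Since $B$ is small, I would instead enlarge the picture: take $|B|$ small but use the $2k-1\approx\delta n$ pairings, each shared by about $1/\delta$ matchings. Two matchings sharing a $B$-edge are discarded from the final family, or one of them is. A Turán/greedy selection then keeps roughly a $(1-O(\delta))$ fraction, and we take $\delta\ll\eps$.

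The remaining conditions are edge-disjointness of the new $A$–$B$ edges across different $i$ and the "far apart" condition. I would handle both by choosing the inserted edges $e^i_j$ randomly, or by the rotational symmetry: choose $e^i_j$ as the rotation by $i$ of a fixed set of edges of $M_1$. The edges $x b_j$ are then distinct for distinct $i$, because $x$ rotates. A pair $(i,\ell)$ for which the inserted positions interfere on the Hamilton cycle $M_i\cup M_\ell$ is counted as bad. I expect this verification to be the main obstacle: one has to show that rerouting at $O(\delta n)$ well-separated places on the Hamilton cycle yields one cycle and not several, and that only $o(n)$ matchings, rather than $o(n^2)$ pairs, must be removed. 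For the Hamilton property I would check locally that replacing $xy$ by $x b\, b' y$ on one cycle edge just lengthens the cycle. Interference happens only when both matchings modify incident edges, and a random choice makes this happen for few pairs, which can be removed greedily.
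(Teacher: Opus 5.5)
Your proposal has a genuine gap: the absorption step fails as described, and the Hamiltonicity of the unions, which carries the whole proof, is left unproved.

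\textbf{The modification and the counting.} Replacing $xy\in M_i$ by $xb_j$, $b_j'y$ \emph{and} adding $b_jb_j'$ puts $b_j$ and $b_j'$ in two edges of the same matching, so $M_i'$ is not a matching. The only valid move is $xy\mapsto\{xb_j,\,b_j'y\}$, with no edge inside $B$. Nor can edges inside $B$ be used on a large scale. There are fewer than $2k^2$ of them, so $(1-\eps)n$ edge-disjoint matchings contain on average at most about $4k^2/n\le 2\delta k$ of them. Hence almost every kept matching must send almost all of $B$ into $A$. In particular your sharing scheme fails outright. If each of the $2k-1$ pairings of $B$ is used by about $1/\delta$ matchings, any two of those share all $k$ edges of the pairing. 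The conflict graph is then a disjoint union of cliques of size about $1/\delta$, so you can keep at most $2k-1\le\delta n$ matchings, not a $(1-O(\delta))$ fraction.

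\textbf{Hamiltonicity is global, not local.} In $M_i'\cup M_\ell'$, a vertex $b\in B$ matched into $A$ by both matchings lies on a path $z\,b\,w$. Here $z$ is an endpoint of a removed edge of $M_i$ and $w$ an endpoint of a removed edge of $M_\ell$, typically far apart on the cycle $M_i\cup M_\ell$. Your local detour $x\,b\,b'\,y$ needs $bb'$ in the partner matching, which is exactly what is unavailable. So the union is obtained by cutting $M_i\cup M_\ell$ at up to $2k$ places and reconnecting the segments through $B$, and whether this gives one cycle is a global condition. For random choices the reconnection behaves like a random permutation of the segments, so it is a single cycle only with probability tending to $0$ as $k$ grows. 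Most pairs would then be bad, and deleting $o(n)$ matchings cannot fix that.

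\textbf{What is missing, and how the paper supplies it.} You need a rigid choice together with a global invariant. Your rotational option does work if made precise. Write the folklore factorisation as $M_d=\{u_0d\}\cup\{(d-j,d+j):1\le j\le q\}$ on $\{u_0\}\cup\mathbb{Z}_p$, and let $B=\{u_{\pm1},\dots,u_{\pm k}\}$. For every $d$, replace the $k$ \emph{shortest} chords $(d-j,d+j)$, $1\le j\le k$, by $(u_{-j},d-j)$ and $(u_j,d+j)$. This uses the same pairing of $B$ for all $d$, and the insertion sites are clustered rather than spread out. It is exactly the paper's construction with $r=2k+1$, and its proof is not local. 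Suppose a cycle $C$ of $M_{d_1}\cup M_{d_2}$ contains $r'$ vertices of $B\cup\{u_0\}$ and $t$ chords of each matching. Summing the labels of $V(C)\cap\mathbb{Z}_p$ along $M_{d_1}$ and along $M_{d_2}$ gives $(r'+2t)(d_1-d_2)\equiv 0\pmod p$. Since $1\le r'+2t\le p$, this forces $r'+2t=p$, so $C$ is Hamiltonian. The argument handles all pairs at once, so no bad pairs need to be removed.
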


We remark that Theorem~\ref{theo:many_matchings} does not immediately imply Theorem~\ref{theo:main} since the construction of Theorem~\ref{theo:many_matchings} cannot be extended to a $1$-factorisation by simply adding perfect matchings; in fact we cannot even add a single perfect matching.
Instead, we show that, after deleting from $K_n$ a random large subset of the matchings given by Theorem~\ref{theo:many_matchings}, the leftover graph is a regular robust expander, whose degree is linear in $n$, and thus it contains a $1$-factorisation by a celebrated result of K\"uhn and Osthus~\cite{KO:13}.
\medskip

\noindent {\bf Comment on the use of AI.} The construction in Theorem~\ref{theo:many_matchings} is a generalisation of the one presented at the beginning of the Introduction and was generated by ChatGPT 5.4, which also gave a correct but rather long proof.
The one presented here is a cleaner, shorter and substantially different one, and was obtained by the authors.
The proof of Theorem~\ref{theo:main} was entirely obtained by the authors.

\section{Proof of Theorem~\ref{theo:many_matchings}}
\label{sec:many_matchings}
We start by informally describing the construction in Theorem~\ref{theo:many_matchings} and we refer the reader to its proof for a formal definition.
It is a generalisation of the construction described in Section~\ref{sec:intro} and Figure~\ref{fig:perfect}.
Let $r \in \mathbb{N}$ be an integer with $1 \le r \le n/2$. 
Put $n-r$ vertices as the corners of a regular polygon and the other $r$ in the centre.
Consider the matching obtained by joining the $r$ vertices in the centre to $r$ consecutive vertices on the polygon, together with all the edges across them, symmetrically on both sides.
(This can be better visualised in Figure~\ref{fig:many_matchings} drawn for $r=5$ and $n=16$.)
Then rotate the matching around the centre $n-r$ times. 
We will show that if $n-r$ is an odd prime number, then any two such matchings form a perfect pair.

\begin{prop}
\label{prop:prime}
    Let $n,r \in \mathbb{N}$ such that $n$ is even, $1 \le r \le n/2$ and $n-r$ is an odd prime number.
    Then $K_n$ contains $n-r$ perfect matchings such that any pair is perfect. 
\end{prop}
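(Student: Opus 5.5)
We use the rotational construction described before the statement. Write $p=n-r$. Since $n$ is even and $p$ is odd, $r$ is odd; set $s=(r-1)/2$. Take vertex set $\bZ_p\cup\{c_{-s},\dots,c_s\}$, where the $c_j$ are the $r$ central vertices. Define $M_0$ to consist of the edges $c_jj$ for $|j|\le s$ together with the edges $\{x,-x\}$ for the remaining $x\in\bZ_p$, that is, for $x\in\{s+1,\dots,p-s-1\}$. There are $p-r=n-2r$ such $x$, an even number, and $x\neq -x$ because $p$ is odd, so $M_0$ is a perfect matching. For $i\in\bZ_p$ let $M_i$ be the image of $M_0$ under $x\mapsto x+i$ on $\bZ_p$, fixing every $c_j$. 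Concretely, $M_i$ consists of the edges $c_j(j+i)$ and $\{i+x,i-x\}$ for $|x|>s$.

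\emph{Edge-disjointness.} An edge $\{y,z\}$ of $M_i$ between polygon vertices satisfies $y+z=2i$. Since $2$ is invertible mod $p$, this determines $i$. An edge $c_jy$ of $M_i$ satisfies $y=j+i$, which again determines $i$. So the $p$ matchings are edge-disjoint.

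\emph{Reduction.} Fix $a\ne b$. Applying the rotation $x\mapsto x-a$ maps the pair $\{M_a,M_b\}$ onto $\{M_0,M_d\}$ with $d=b-a\neq0$, so it suffices to show that $M_0\cup M_d$ is a Hamilton cycle. It is $2$-regular, so we only need to show it is connected.

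\emph{Behaviour away from the centre.} On polygon vertices far from both $0$ and $d$, $M_0$ acts as the reflection $\sigma_0(x)=-x$ and $M_d$ acts as $\sigma_d(x)=2d-x$. Their composition $\sigma_d\sigma_0(x)=x+2d$ is a rotation of prime order $p$, because $2d\not\equiv0\pmod p$. The centres are the only disruption: each $c_j$ has degree $2$ in the union and simply joins $j$ to $j+d$, that is, its $M_0$-neighbour to its $M_d$-neighbour.

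\emph{First-return map.} Consider the walk that alternately uses $M_0$ and $M_d$ edges. Let $\phi$ be the map sending a polygon vertex $x$ to the next polygon vertex reached after one $M_0$-edge, one $M_d$-edge, and any centre detours in between. We compute $\phi$ case by case.
\begin{itemize}
\item If $|x|\le s$, the walk passes through $c_x$, so $\phi(x)=x+d$.
\item If $|x|>s$ and $|{-x}-d|>s$, then $\phi(x)=x+2d$.
\item If $|x|>s$ but $|{-x}-d|\le s$, the $M_d$-edge at $-x$ enters a centre. The walk then returns to the polygon by an $M_0$-edge, so the roles of the two matchings swap.
\end{itemize}
To handle the third case cleanly, replace $\phi$ by the first-return map of the alternating walk on the set of ``$M_0$-entry'' polygon vertices. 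Each centre visit shifts the position by $d$ and flips parity, while each ordinary double step shifts by $2d$. Equivalently, lift to $\bZ_p\times\{0,1\}$ (position, which matching is used next). In this lift every elementary step is either $x\mapsto -x$ or $x\mapsto 2d-x$, or the centre step $x\mapsto x+d$ with a parity flip.

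\emph{Target conclusion.} We aim to show that, in suitable coordinates, the walk visits the polygon vertices in the order of an arithmetic progression with common difference $d$. A convenient choice is $y=x$ on one parity class and $y=d-x$ on the other. Concretely, we should check that every step increases the coordinate by exactly $d$ modulo $p$. Granting this, primality of $p$ together with $d\ne0$ forces the orbit to have length $p$. Hence the single cycle through any polygon vertex contains all of $\bZ_p$. It then also contains every $c_j$, since each $c_j$ is adjacent to polygon vertices. So $M_0\cup M_d$ is a Hamilton cycle.

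\emph{Main obstacle.} The hard step is this bookkeeping: finding the right coordinate in which both the reflection steps and the centre steps become the uniform translation by $d$, and verifying it in every boundary case, including when the windows $[-s,s]$ and $d+[-s,s]$ overlap. I would first try the coordinate ``$x$ at $M_0$-exits, $d-x$ at $M_d$-exits''. Under it, a reflection pair $x\to -x\to 2d-x$ should read as $+d,+d$, and a centre step $j\to j+d$ should also read as $+d$. I would check these three local cases and then invoke primality of $p$.
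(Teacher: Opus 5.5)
Your construction, the edge-disjointness argument and the reduction to $M_0\cup M_d$ are fine. The gap is that the step which actually proves the proposition, namely that the cycle through a polygon vertex contains all of $\bZ_p$, is only announced (``we should check'', ``granting this''). Moreover, the lift you describe would fail that check as written. Take states $(x,m)\in\bZ_p\times\{0,1\}$, where $m$ records whether the next edge is from $M_0$ or from $M_d$. The successor of a state is
\begin{gather*}
(x,0)\mapsto(-x,1)\ \text{if } |x|>s, \qquad (x,1)\mapsto(2d-x,0)\ \text{if } |x-d|>s,\\
(x,0)\mapsto(x+d,0)\ \text{if } |x|\le s, \qquad (x,1)\mapsto(x-d,1)\ \text{if } |x-d|\le s.
\end{gather*}
So a centre detour \emph{preserves} the label, because it uses one edge of each matching. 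When entered from the $M_d$ side it moves the position by $-d$, not $+d$. Also, the reflection pair is $x\to -x\to x+2d$, not $x\to -x\to 2d-x$. If the centre step flipped the label as you state, your coordinate $y(x,0)=x$, $y(x,1)=d-x$ would send $j$ to $-j$ rather than to $j+d$.

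With the correct transitions your coordinate does work, and there are no boundary cases. Each state has exactly one applicable rule, whatever the overlap of $[-s,s]$ and $d+[-s,s]$. Under the four rules $y$ goes from $x$ to $x+d$, from $d-x$ to $2d-x$, from $x$ to $x+d$, and from $d-x$ to $2d-x$, respectively. So every step adds exactly $d$.

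You still need one observation you left implicit. Traversing a cycle $C$ once in a fixed direction visits each of its $L$ polygon vertices exactly once, with $1\le L\le p$, and returns to the initial state after exactly $L$ steps. Telescoping then gives $Ld\equiv 0\pmod{p}$, hence $p\mid L$ and $L=p$. Each $c_j$ then lies on $C$ because its neighbour $j$ does.

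Once repaired, your argument is a telescoping version of the paper's. The paper sums the labels of the polygon vertices of $C$ twice, once via the $M_{d_1}$-edges and once via the $M_{d_2}$-edges. This gives $(r'+2k)(d_1-d_2)\equiv 0\pmod{p}$, where $r'+2k$ is exactly your $L$. The paper's double count needs no orientation or state lift and is shorter. Your version additionally shows that the Hamilton cycle runs through the $y$-coordinates as an arithmetic progression with difference $d$.
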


\begin{figure}[h]
    \centering
    \includegraphics[scale=0.4]{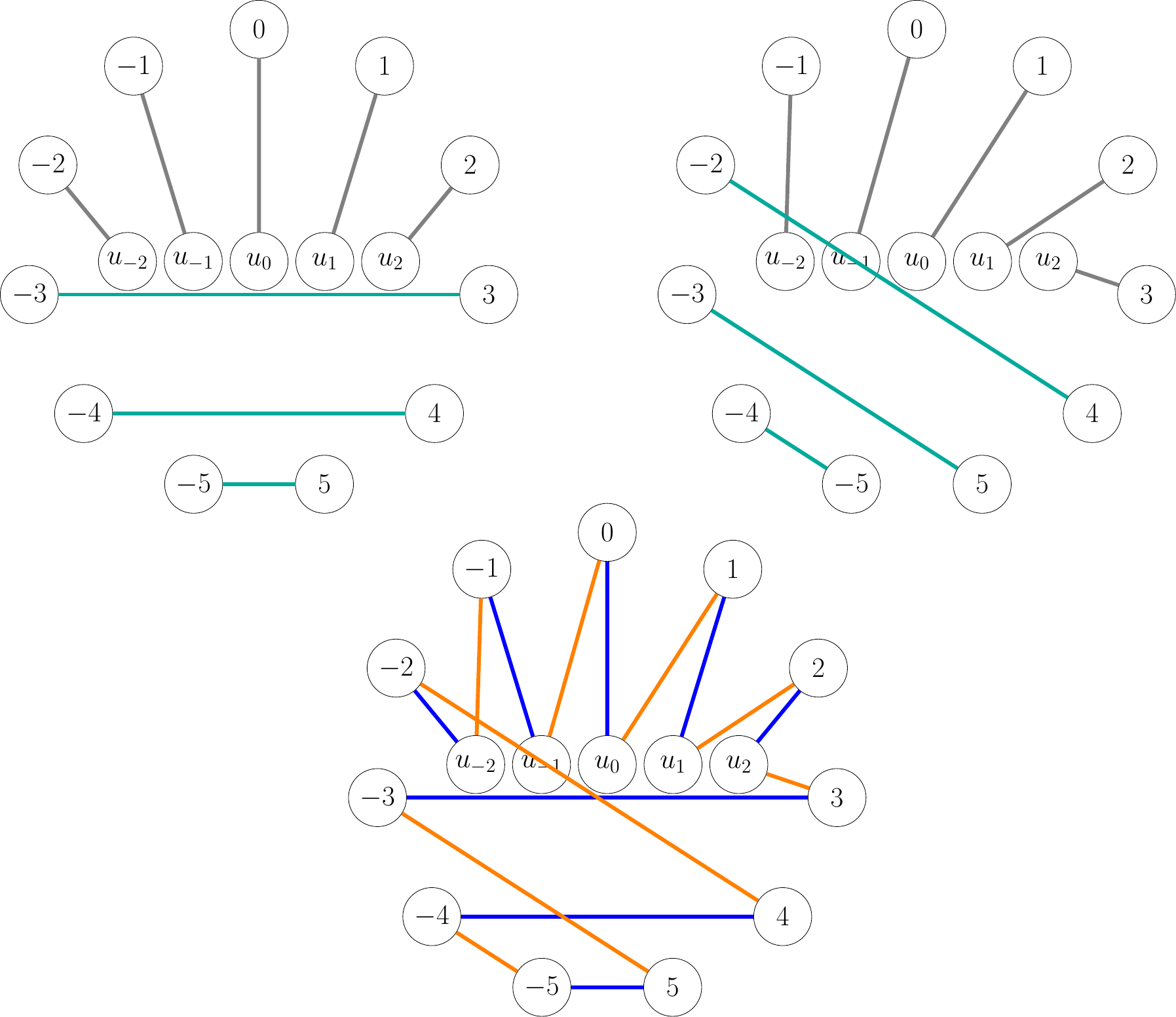}
    \caption{The figure describes the construction we use in Proposition~\ref{prop:prime} when $n=16$ and $r=5$.
    The top left graph shows the first perfect matching (with reference to the notation introduced in the proof, it is $M_0=A_0 \cup B_0$ with the gray edges belonging to $A_0$ and the green ones to $B_0$).
    The top right graph shows the matching obtained after the first rotation around the centre (so $M_1$ with, again, gray edges belonging to $A_1$ and green ones to $B_1$).
    The bottom shows that the union of the two matchings above is indeed a Hamilton cycle.}
    \label{fig:many_matchings}
\end{figure}

If we want to use Proposition~\ref{prop:prime} to prove Theorem~\ref{theo:many_matchings}, we need to show that there is a prime number $p$ sufficiently close to $n$ (and then set $r:=n-p$ in the proposition).
This problem goes back at least to a question of Legendre, who famously conjectured that there is a prime number between $n^2$ and $(n+1)^2$ for each integer $n \ge 1$.
This would follow from the existence of a prime number in each interval of the form $[n-n^\gamma,n]$ with $\gamma=0.5$.
While this is still open, many authors have found smaller and smaller $\gamma$ such that $[n-n^\gamma,n]$ always contains a prime number.
In particular, we are going to use the following result, although any $\gamma$ would suffice for us.

\begin{lemma}[Baker, Harman and Pintz~\cite{BHP:2001}]
\label{lemma:prime}
    There exists a constant $n_0>0$ such that for every integer $n\geq n_0$ the interval $[n-n^{0.525},n]$ contains a prime number.
\end{lemma}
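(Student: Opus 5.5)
Lemma~\ref{lemma:prime} is a deep result in analytic number theory, and I do not plan to reprove it in full; the paper only needs it as a black box, and the argument below only outlines where the proof comes from. It is also worth noting that the application to Theorem~\ref{theo:many_matchings} needs much less. It suffices to have a prime in $[n-n^\gamma,n]$ for some fixed $\gamma<1$, or even in $[n-\delta n,n]$ for every fixed $\delta>0$ once $n$ is large. That weaker statement already follows from the prime number theorem: $\pi(n)-\pi((1-\delta)n)\sim \delta n/\log n>0$. So the safest route is to record this reduction and cite \cite{BHP:2001} for the sharp exponent.

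If one wants an actual proof of an exponent $\gamma<1$, the standard plan has four steps.

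\begin{enumerate}
\item Write $\psi(x)-\psi(x-h)$ with $h=x^{\gamma}$ using the truncated explicit formula
\[
\psi(x)-\psi(x-h)=h-\sum_{|\Im\rho|\le T}\frac{x^{\rho}-(x-h)^{\rho}}{\rho}+O\!\left(\frac{x\log^2 x}{T}\right),
\]
choosing $T$ slightly larger than $x/h$.

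\item Bound each zero's contribution by $\left|\frac{x^{\rho}-(x-h)^{\rho}}{\rho}\right|\le h\,x^{\beta-1}$, where $\beta=\Re\rho$.

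\item Sum over zeros using a zero-density estimate $N(\sigma,T)\ll T^{A(1-\sigma)}\log^{B}T$, together with the Vinogradov--Korobov zero-free region. This shows that the zero sum is $o(h)$ whenever $h\ge x^{1-1/A+\eps}$.

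\item Conclude with Huxley's density exponent $A=12/5$, which gives every $\gamma>7/12$ and already suffices for the paper.
\end{enumerate}

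Reaching the exponent $0.525$ requires abandoning an asymptotic formula and proving only a positive lower bound, via Harman's sieve. In outline:

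\begin{enumerate}
\item Decompose the indicator function of primes in $(x-h,x]$ by Buchstab's identity into sums over products of primes in various ranges.

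\item Handle the Type I and Type II pieces by Perron's formula, converting them into mean- and large-value estimates for Dirichlet polynomials. This step uses Heath-Brown-type identities, the Halász--Montgomery inequality and fourth-moment bounds for $\zeta$.

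\item Discard the remaining ranges, which contribute nonnegatively to the count, and show numerically that the discarded part is strictly less than the main term.
\end{enumerate}

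The main obstacle is this last step: the Type II estimates must be optimised over many ranges, and a computer-assisted numerical integration of the Buchstab-type losses is needed to verify that the total is below $1$ exactly at $\gamma=0.525$. I would not attempt this here. Instead, I would invoke \cite{BHP:2001} directly, and observe that the weaker prime number theorem version suffices for all later uses in the paper.
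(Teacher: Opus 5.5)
Your proposal matches the paper, which also uses this lemma purely as a citation of Baker--Harman--Pintz and gives no proof. Your extra remark is correct and sharpens the paper's own comment that any exponent $\gamma<1$ would do: the prime number theorem alone suffices for the application, since one only needs an odd prime $p\in[(1-\eps)n,n]$ with $\eps\le 1/2$, which gives $1\le n-p\le n/2$ (the only slip in your sketch, harmless, is that step 2 should read $\ll h\,x^{\beta-1}$, because $\int_{x-h}^{x} t^{\rho-1}\,dt$ is bounded by $h(x-h)^{\beta-1}$).
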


The proof of Theorem~\ref{theo:many_matchings} readily follows.

\begin{proof}[Proof of Theorem~\ref{theo:many_matchings}]
    Let $1/n_0 \ll \eps$ and $n \ge n_0$.
    By Lemma~\ref{lemma:prime} there exists an odd prime number $p$ in the interval $[n-n^{0.525},n]$.
    Then, by Proposition~\ref{prop:prime} applied with $n$ and $r:=n-p$, we get a collection of $n-r$ perfect matchings such that any pair is perfect.
    This is enough to conclude since $n-r=p \ge n-n^{0.525} \ge (1-\eps)n$, where the last inequality uses that $n \ge n_0$.
\end{proof}

We are left to prove Proposition~\ref{prop:prime}.

\begin{proof}[Proof of Proposition~\ref{prop:prime}]
    Set $p:=n-r$ and recall that, by assumption, $p$ is an odd prime number and $r \le p$.
    Then set $q:=\frac{p-1}{2}$ and $s=\frac{r-1}{2}$ and observe they are both non-negative integers.
    Let $R:=\{u_{-s}, u_{-s+1}, \dots, u_0,\dots,u_{s-1},u_s\}$ be a set of $r$ vertices and $S$ be a set of $p$ vertices labeled with the elements of $\bZ_p=\{-q,-q+1.\dots,0,\dots,q-1,q\}$.
    When denoting the vertices of $S$ we will always think their labels modulo $p$.

    Define the perfect matching $M_0$ as the union $A_0 \cup B_0$ with $A_0:=\{(u_j,j):-s \le j \le s\}$ and $B_0:=\{(-j,j):s+1 \le j \le q\}$.
    Then define the other matchings as rotations of $M_0$ around the set $R$. More precisely, for $d \in \bZ_p$, let $M_d:=A_d \cup B_d$ with $A_d:=\{(u_j,j+d):-s \le j \le s\}$ and $B_d:=\{(-j+d,j+d):s+1 \le j \le q\}$. Since \(r\le p\), we have \(s\le q\). Hence the vertices \(j+d\), \(-s\le j\le s\), are distinct in \(\mathbb Z_p\), and
 the edges in \(B_d\) pair precisely the remaining vertices of \(S\). Thus \(M_d\) is a perfect matching. Moreover, if \(d_1\ne d_2\), then \(M_{d_1}\cap M_{d_2}=\emptyset\).
Indeed, this is clear for the edges in \(A_{d_i}\), while an edge in
\(B_d\) has the sum of its endpoints equal to \(2d\) modulo \(p\);
since \(2\) is invertible in \(\mathbb Z_p\), such an edge determines
\(d\). 
    We refer to Figure~\ref{fig:many_matchings} for an example with $n=16$ and $r=5$.

    It remains to show that any pair of perfect matchings is perfect.
    Fix two distinct $d_1,d_2 \in \bZ_p$ and let $C$ be a cycle of $M_{d_1}\cup M_{d_2}$.
    We now show that $C$ has the same number of edges in $A_{d_1}$ as it has in $A_{d_2}$, and thus the same is true for $B_{d_1}$ and $B_{d_2}$.
    Let $R':=R \cap V(C)$ and $r':=|R'|$.
    Observe that any vertex of $R'$ is incident in $C$ to two edges, which must belong to $A_{d_1}$ and $A_{d_2}$, respectively.
    Vice versa, if $C$ has an edge of $A_{d_1}$ or $A_{d_2}$, exactly one of its endpoints must belong to $R'$.
    Therefore, we can write $C \cap A_{d_1}=\{(u_j,j+d_1):j \in R''\}$, where $R''$ is the subset of $\{-s,\dots,s\}$ such that $j \in R''$ if and only if $u_j \in R'$, and $C \cap A_{d_2}=\{(u_j,j+d_2):j \in R''\}$.
    We can also conclude that $|C \cap B_{d_1}|=|C \cap B_{d_2}|$ since $C$ alternates between $M_{d_1}$ and $M_{d_2}$ and denote by $k$ their sizes.

    Consider the sum $Z:=\sum_{v \in V(C) \setminus R'} v$.
    We can calculate $Z$ in two ways.
    We can sum over the endpoints (not in $R'$) of the edges of $M_{d_1}$ and get $Z=\sum_{j \in R''}(j+d_1)+k \cdot 2d_1=\left(\sum_{j \in R''}j\right)+r' \cdot d_1+k \cdot 2d_1$.
    Doing the same for the edges of $M_{d_2}$, we get $Z=\left(\sum_{j \in R''}j\right)+r' \cdot d_2+k \cdot 2d_2$.
    By equating the two expressions modulo $p$, we get
    \[
        (r'+2k)(d_1-d_2) \equiv 0 \pmod{p}\, .
    \]
    
     Since $d_1 \not\equiv d_2 \pmod{p}$, it follows that $r'+2k \equiv 0 \pmod{p}$. 
     Since $0 \le r' \le r$ and $0 \le k \le \frac{p-r}{2}$ and they cannot be both simultaneously zero, we have $1\leq r'+ 2k \le r+p-r=p$, implying that $r'+2k=p$, $r'=r$ and $k=\frac{p-r}{2}$. Then $E(C)=2(k+r')=p+r=n$ and thus $C$ spans the entire vertex set, meaning that it is a Hamilton cycle.    
\end{proof}

We remark that if $G$ is the graph obtained from $K_n$ by removing the perfect matchings $\{M_d: d \in \bZ_p\}$ constructed in the proof above, then $G$ does not contain any perfect matching.
This is because the set $R$ is isolated in $G$ and has odd size.

\section{Proof of Theorem~\ref{theo:main}}
We start by defining the relevant terminology for robust expander.
Let \( 0 < \nu \leq \tau < 1 \). Given any graph \( G \) on \( n \) vertices and \( S \subseteq V(G) \), the \emph{\( \nu \)-robust neighbourhood} \( \RN\) of \( S \) is the set of all those vertices \( v \in V(G)\) which have at least \( \nu n \) neighbours in \( S \). Then we say that \( G \) is a \emph{robust \( (\nu, \tau) \)-expander} if \( |\RN| \geq |S| + \nu n \) for all \( S \subseteq V(G) \) with \( \tau n \leq |S| \leq (1 - \tau)n \).
K\"uhn and Osthus~\cite{KO:13} famously proved that every regular robust expander of linear degree can be decomposed in Hamilton cycles, meaning that it contains a set of edge-disjoint Hamilton cycles which together cover all the edges of the graph.
In particular, if the order of the graph is even then each Hamilton cycle can be decomposed into two edge-disjoint perfect matchings, yielding a $1$-factorisation.

\begin{theo}[Corollary of~\cite{KO:13}]
\label{theo:expander}
    For every \( \alpha > 0 \) there exists \( \tau > 0 \) such that for every \( \nu > 0 \) there exists \( n_0 = n_0(\alpha, \nu, \tau) \) for which the following holds. Suppose that 
    \begin{itemize} 
        \item $n \ge n_0$ is an even integer;
        \item \( G \) is an \( r \)-regular graph on \( n\) vertices, where \( r \geq \alpha n \);
        \item \( G \) is a robust \( (\nu, \tau) \)-expander.
    \end{itemize}
    Then \( G \) has a $1$-factorisation.
\end{theo}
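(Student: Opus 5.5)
This statement is a corollary of the Hamilton decomposition theorem of Kühn and Osthus~\cite{KO:13}, so the plan is to apply that theorem directly and then split each Hamilton cycle into two matchings. The version I would quote says the following. For every $\alpha>0$ there is $\tau_0>0$ such that for every $\nu>0$ there is $n_0$ with this property: every $r$-regular robust $(\nu,\tau)$-expander $G$ on $n\ge n_0$ vertices, with $r\ge\alpha n$, $r$ even and $\tau\le\tau_0$, has a Hamilton decomposition. Given $\alpha$, I set $\tau$ to be (at most) this $\tau_0$, and given $\nu$, I take $n_0$ from~\cite{KO:13}.

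The main obstacle is parity. In the statement above $n$ is even, but the regularity $r$ may be odd, whereas a Hamilton decomposition needs $r$ even. I would handle this in three steps.

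\emph{Step 1.} If $r$ is odd, first find a perfect matching $M$ in $G$.
\begin{itemize}
\item The minimum degree is $r\ge\alpha n$, and a robust expander is well connected, so $G$ has a Hamilton cycle. This follows, for instance, from the $n$-even case of~\cite{KO:13}, or from Kühn, Osthus and Treglown's theorem that robust expanders with linear minimum degree are Hamiltonian.
\item Since $n$ is even, taking every other edge of this Hamilton cycle gives a perfect matching $M$.
\end{itemize}

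\emph{Step 2.} Remove $M$ and check that the robust expansion survives.
\begin{itemize}
\item The graph $G':=G-M$ is $(r-1)$-regular, with $r-1$ even and $r-1\ge \alpha n/2$.
\item Removing a perfect matching lowers each vertex's number of neighbours in any set $S$ by at most $1$.
\item Hence $RN_{\nu/2,G'}(S)\supseteq \RN$ once $\nu n/2\ge 1$, and $G'$ is a robust $(\nu/2,\tau)$-expander.
\end{itemize}
The constants therefore degrade only by factors of $2$, and I absorb these into the choice of $\tau$ and $n_0$ by applying~\cite{KO:13} with $\alpha/2$ and $\nu/2$.

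\emph{Step 3.} Decompose and split.
\begin{itemize}
\item Apply~\cite{KO:13} to $G'$, or directly to $G$ when $r$ is even, to get a decomposition into edge-disjoint Hamilton cycles.
\item Each Hamilton cycle has even length $n$, so it splits into two edge-disjoint perfect matchings.
\item Together with $M$ when $r$ is odd, these matchings form a $1$-factorisation of $G$.
\end{itemize}

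A cleaner alternative for the parity issue is that~\cite{KO:13} already states the odd-degree case. There, $G$ decomposes into Hamilton cycles plus one perfect matching whenever $n$ is even, and in that case we simply split the cycles as above.
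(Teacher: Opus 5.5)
Your proposal is correct and follows the paper's route: the paper also just invokes the Kühn--Osthus Hamilton decomposition theorem and splits each Hamilton cycle of even length $n$ into two perfect matchings. Your extra treatment of odd $r$ fills in a parity detail the paper's one-line justification leaves implicit. There you remove a perfect matching taken from a Hamilton cycle (via the Kühn--Osthus--Treglown result) and check that $G-M$ is still a robust $(\nu/2,\tau)$-expander of degree at least $\alpha n/2$. This detail genuinely matters, since the degree $|\mathcal{M}'|+(n-1-|\mathcal{M}|)$ arising in the proof of Theorem~\ref{theo:main} need not be even.
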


Additionally, we use a result of Alon, Pokrovskiy and Sudakov~\cite{APS:17} which shows that a random subgraph of a properly edge-coloured $K_n$ formed by the edges of a random set of colours has very good expansion properties.
We remark that for any two subsets $A$ and $B$ of a graph $G$, in the quantity $e_G(A,B)$, we count twice the edges with both endpoints in $A\cap B$ and that we write $f \gg g$ if $f/g$ tends to infinity with $n$.

\begin{theo}[Theorem 1.3 in~\cite{APS:17}, see also the remark afterwards]
\label{theo:colour_classes}
    Given a proper edge-colouring of $K_n$, let $G$ be the subgraph obtained by choosing every colour class randomly and independently with probability $p \le 1/2$. Then, with high probability, for every two subsets $A, B \subseteq [n]$ with $|A|, |B| \gg (\log n/p)^2$, we have $e_G(A, B) \ge (1 - o(1))p|A||B|$.
\end{theo}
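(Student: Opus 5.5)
This statement is quoted from~\cite{APS:17}, so in the paper it is simply invoked. If I had to prove it myself, I would proceed as follows.

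Write $e_G(A,B)=\sum_c X_c\, e_c(A,B)$, where the sum runs over the $n-1$ colour classes (or however many the given colouring uses), $X_c\sim\mathrm{Bernoulli}(p)$ are independent, and $e_c(A,B)$ counts the edges of colour $c$ between $A$ and $B$ (with the double counting convention). Since every colour class is a matching, $e_c(A,B)\le 2\min(|A|,|B|)$. Also $\sum_c e_c(A,B)=|A||B|-|A\cap B|=(1-o(1))|A||B|$. So $\mathbb{E}\,e_G(A,B)=(1-o(1))p|A||B|$, and only a lower-tail estimate, uniform over all pairs $(A,B)$, is needed.

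\textbf{Step 1: reduction to sets of a fixed size.} Fix $k$ with $(\log n/p)^2\ll k$ but $k$ still of the same order. If the bound holds for all pairs of $k$-sets, it holds for all larger sets. This follows by averaging: $e_G(A,B)$ is, up to the factor $|A||B|/k^2$, the average of $e_G(A',B')$ over uniformly random $k$-subsets $A'\subseteq A$ and $B'\subseteq B$. So it suffices to handle $|A|=|B|=k$. A union bound over such pairs then has to beat about $\binom{n}{k}^2\le e^{2k\log n}$ events.

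\textbf{Step 2: concentration via a heavy/light split of colours.} A plain Bernstein bound on $\sum_c X_c e_c$ gives variance at most $p\max_c e_c\cdot\sum_c e_c\le 2pk^3$. The resulting tail $e^{-\delta^2pk}$ is too weak against $e^{2k\log n}$. I would therefore fix a threshold $t$ and split the colours into \emph{light} ones ($e_c(A,B)\le t$) and \emph{heavy} ones.
\begin{itemize}
\item \emph{Light part.} The variance is at most $pt k^2$, so the lower-tail probability is $\exp(-\Omega(\delta^2 p k^2/t))$. This beats $e^{2k\log n}$ as soon as $t\le c\,\delta^2 pk/\log n$.
\item \emph{Heavy part.} There are at most $k^2/t$ heavy colours, i.e.\ $O(k\log n/(\delta^2 p))$ of them.
\end{itemize}
The point of the hypothesis $k\gg(\log n/p)^2$ is that it makes $t=\Theta(\delta^2pk/\log n)\gg\sqrt{k}$. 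Hence the number of heavy colours, at most $k^2/t$, is $o(k^{3/2})$, and their total mass is the quantity that must be controlled.

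\textbf{Step 3 (main obstacle): the heavy colours.} They cannot be handled pair by pair, since a naive union bound over $(A,B)$ fails again. My first attempt would be to change what the union bound ranges over: decompose the heavy colours into colour classes, and apply the union bound to the events ``the chosen colours cover a $(1-\delta)p$ fraction of this matching mass''. These events depend only on a collection of at most $k^2/t$ colours, so there are only $n^{O(k^2/t)}$ of them, which is $e^{o(k)\log n}$. Each such event involves Bernoulli sums with total mass at least $t$ per colour, which should give sufficient concentration. This approach could fail if the heavy mass concentrates on too few colours to concentrate well. In that case, I would try the second-moment route instead: bound $e_G(A,B)$ from below via Cauchy--Schwarz in terms of codegrees $|N_G(a)\cap N_G(a')\cap B|$. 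These are sums of indicators over pairs of colours and concentrate in the required range precisely when $|B|\gg(\log n/p)^2$, which would explain the square in the hypothesis.
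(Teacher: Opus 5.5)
The paper does not prove this statement. It quotes it from Alon, Pokrovskiy and Sudakov and uses it as a black box, so your sketch has to stand on its own. Steps 1 and 2 are fine. Step 3, however, is the whole difficulty, and the argument you propose for it fails.

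\textbf{The count in Step 3 is wrong.} We have $k^2/t=\Theta(k\log n/(\delta^2p))\gg k$, so $n^{O(k^2/t)}=e^{\Theta(k\log^2 n/(\delta^2p))}$, not $e^{o(k)\log n}$. The bound $o(k^{3/2})$ on the number of heavy colours does not help, since $k^{3/2}\gg k$.

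\textbf{No union bound over colour collections can work.} Fix a collection of $h$ colours. The event that the chosen ones carry less than a $(1-\delta)p$-fraction of its mass has probability at least $(1-p)^h\ge e^{-2ph}$ (all $h$ unchosen), whatever the masses. There are $\binom{n-1}{h}$ such collections. Indeed, whenever $h$ is below the number of unchosen colours, some collection of $h$ colours is entirely unchosen. What must be shown is that collections which actually arise as the heavy colours of some pair $(A,B)$ are not aligned with the unchosen colours. Your plan has no mechanism for this.

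\textbf{The heavy part cannot be avoided.} On $V=\mathbb{F}_2^d$ with $c(xy)=x+y$, take $A=H$ and $B=v+H$ for a subgroup $H$ of order $k$ and $v\notin H$. There are exactly $k$ colours between $A$ and $B$, each with $k$ edges, so all the mass is heavy. This pair fails with probability $\Pr[\mathrm{Bin}(k,p)<(1-\delta)pk]=e^{-\Theta(\delta^2pk)}$, so any proof must use that such pairs are rare.

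\textbf{The fallback does not rescue this.} Cauchy--Schwarz on codegrees bounds $e_G(A,B)$ from above, not below. Moreover, $|N_G(a)\cap N_G(a')\cap B|$ cannot be controlled uniformly in $B$: take $B$ inside the common non-neighbourhood of $a,a'$. So the claimed concentration for $|B|\gg(\log n/p)^2$ has no basis.

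\textbf{A route that works.} Avoid union bounds over large sets and use that every star is rainbow, so $d_G(v,S)\sim\mathrm{Bin}(|S|,p)$ exactly for $v\notin S$. Let $s=C\log n/(\delta^2p)$. Show that with high probability, for \emph{every} $s$-set $S$, fewer than $8s^2/\delta$ vertices $v$ have $d_G(v,S)<(1-\delta)ps$.

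Each colour lies on at most $s$ edges touching $S$, so each star into $S$ shares a colour with at most $s^2$ other such stars. Given a bad set of size at least $8s^2/\delta$, sample it with density $\delta/(4s)$. Delete each sampled vertex whose star shares more than $\delta s/2$ colours with other sampled stars. By Markov, this leaves at least $s$ bad vertices in expectation, whose stars have pairwise disjoint private colour sets of size at least $(1-\delta/2)s$.

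For each such vertex, consider the event that fewer than $(1-\delta)ps$ of its private colours are chosen. These events contain the bad events and are independent, each of probability $e^{-\Omega(\delta^2ps)}=n^{-\Omega(C)}$. This beats the at most $n^{2s}$ choices of $S$ and of the $s$ vertices.

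Finally, $e_G(A,B)=\frac{|A|}{s}\,\mathbb{E}_S\,e_G(S,B)$ for a uniformly random $s$-subset $S\subseteq A$. Also $e_G(S,B)\ge(1-\delta)ps\,(|B|-s-8s^2/\delta)$. Together these give $e_G(A,B)\ge(1-2\delta)p|A||B|$ once $|B|\ge 9s^2/\delta^2$. This is exactly where the square in $(\log n/p)^2$ comes from.
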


We have all the tools to prove Theorem~\ref{theo:main}.
\begin{proof}[Proof of Theorem~\ref{theo:main}]
    We can assume that $\eps \le 1/2$ and we choose constants $\tau$ and $\nu$ with $\tau \ll \eps$ and $\nu \le \tau \eps/4$.
    Moreover we assume that $n$ is sufficiently large and even. Let $\cM$ be a collection of perfect matchings of $K_n$ such that any pair in $\cM$ is perfect and $R$ be the graph obtained by deleting from $K_n$ the edges of the perfect matchings in $\cM$.
    By Theorem~\ref{theo:many_matchings}, we can assume that $|\cM| \ge (1-\eps)n$.
    
    Consider any proper colouring of $R$ and let $\cR$ be the set of the colour classes.
    Then $\cM \cup \cR$ is the set of the colour classes of a proper edge-colouring of $K_n$.
    Let $G$ be the random subgraph obtained by keeping every colour class of $\cM \cup \cR$ randomly and independently with probability $\eps$.
    If $\cM'$ denotes the set of colour classes of $\cM$ which have been kept, by a simple application of Chernoff's bound, we have that with high probability $\eps n/2 \le |\cM'| \le 2 \eps n$.
    Moreover, by Theorem~\ref{theo:colour_classes}, with high probability for every two subsets $A,B \subseteq V(K_n)$ with $|A|,|B| \ge \tau n/2$, we have $e_G(A,B) \ge \eps \tau^2 n^2/8$.
    We claim that this implies that $G$ is a robust $(\nu,\tau)$-expander.
    In fact, suppose this is not the case and let $S \subseteq V(K_n)$ be such that $\tau n \le |S| \le (1-\tau)n$ and $|\RN| < |S|+\nu n$.
    Then $|\RN| \le (1-\tau+\nu)n \le (1-\tau/2)n$ and thus there exists a set $T \subseteq V(K_n) \setminus \RN$ with $|T| = \tau n/2$.
    Then, by the above property, $e_G(S,T) \ge \eps \tau^2 n^2/8$ and, by averaging over the vertices of $T$, there exists a vertex $t \in T$ whose number of neighbours in $S$ is at least $\frac{\eps \tau^2 n^2/8}{|T|} = \eps \tau n/4 \ge \nu n$ contradicting that $t \not\in \RN$.
    
    Consider any outcome of the random choice such that $\eps n/2 \le |\cM'| \le 2 \eps n$ and $G$ is a robust $(\nu,\tau)$-expander.
    Let $H$ be the subgraph of $K_n$ obtained as the union of the edges in $\cM'$ and those in $\cR$.
    Then $H$ is $r$-regular with $r=|\cM'|+(n-1- |\cM|) \ge \eps n/2$.
    Moreover, since $H \supseteq G$, then $H$ is a robust $(\nu,\tau)$-expander as well and Theorem~\ref{theo:expander} (applied with $\alpha=\eps/2$) implies that $H$ has a $1$-factorisation $\cM''$.

    We conclude that $(\cM \setminus \cM') \cup \cM''$ is a $1$-factorisation of $K_n$. Moreover each pair of matchings in $\cM \setminus \cM'$ is perfect by construction and $|\cM \setminus \cM'| \ge (1-\eps)n-2\eps n \ge (1-3\eps)n$.
    By starting the proof with $\eps/3$ in lieu of $\eps$, we get the desired $1$-factorisation.
\end{proof}

\bibliographystyle{plain}  
\bibliography{references}  

\end{document}